\newcommand{\pref}[1]{\textup{(\ref{#1})}}
\newcommand{\fullref}[2]{\ref{#1}\pref{#1-#2}}
\newcommand{\fullcref}[2]{\cref{#1}\pref{#1-#2}}
\newcommand{\fullCref}[2]{\Cref{#1}\pref{#1-#2}}
\newcommand{\csee}[1]{(see \cref{#1})}
\numberwithin{equation}{section}
\newtheorem{cor}[equation]{Corollary}
\newtheorem{lem}[equation]{Lemma}
\newtheorem{prop}[equation]{Proposition}
\newtheorem{thm}[equation]{Theorem}
\newtheorem{copythm}{Theorem}
\numberwithin{copythm}{section}
\crefname{lem}{lemma}{lemmas}
\Crefname{lem}{Lemma}{Lemmas}
\crefname{prop}{proposition}{propositions}
\Crefname{prop}{Proposition}{Propositions}
\Crefname{thm}{Theorem}{Theorems}
\crefname{thm}{theorem}{theorems}
\theoremstyle{definition}
\newtheorem*{ack}{Acknowledgements}
\newtheorem{assump}[equation]{Assumption}
\newtheorem{defn}[equation]{Definition}
\newtheorem{notation}[equation]{Notation}
\newtheorem{rem}[equation]{Remark}
\newtheorem{rems}[equation]{Remarks}
\newcounter{caseholder} % tell hyperref not to use same destination for different cases
\newcounter{case}
\numberwithin{case}{caseholder}
\newenvironment{case}[1][\unskip]{\refstepcounter{case}\bf\sffamily
\unskip\medskip \indent Case \thecase\ #1.\ \it}{\unskip\upshape}
\renewcommand{\thecase}{\arabic{case}}
\Crefname{case}{Case}{Cases}
\crefname{case}{case}{cases}
\newcounter{subcase}
\numberwithin{subcase}{case}
\crefname{subcase}{subcase}{subcases}
\Crefname{subcase}{Subcase}{Subcases}
\DeclareMathOperator{\Aut}{Aut}
\DeclareMathOperator{\Cay}{Cay}
\newcommand{\Cayd}{\mathop{\mathsf{C}\mkern-1mu\overrightarrow{\mathsf{ay}}}}
\newcommand{\ZZ}{\mathbb{Z}}
\newcommand{\dX}{\overrightarrow{X}}
\newcommand{\iso}{\cong}
\newcommand{\wG}{\widehat G}
\newcommand{\wS}{\widehat S}
\newcommand{\wX}{\widehat X}
\newcommand{\edge}{\mathrel{\vrule width 10pt height 3pt depth-2.5pt}} % adjust if not [11p]{amsart} !!!
\renewcommand{\MR}[1]{\href{https://mathscinet.ams.org/mathscinet-getitem?mr=#1}{MR\,#1}}
\newcommand{\noprelistbreak}{\smallskip\@nobreaktrue\nopagebreak} 
\begin{document}

\title[Non-Cayley-Isomorphic Cayley graphs]
{Non-Cayley-Isomorphic Cayley graphs \\ from Non-Cayley-Isomorphic Cayley digraphs}

\author{Dave Witte Morris and Joy Morris}
\address{%
\hskip-\parindent % not for publication version !!!
Department of Mathematics and Computer Science, University of Lethbridge, 
\newline % not for publication version !!!
4401 University Drive, Lethbridge, Alberta, T1K~3M4, Canada}
\email{dmorris@deductivepress.ca, https://deductivepress.ca/dmorris}
\email{joy.morris@uleth.ca}

\date{\today}

\begin{abstract}
A finite group~$G$ is a \emph{non-DCI group} if there exist subsets $S_1$ and~$S_2$ of~$G$, such that the associated Cayley digraphs $\Cayd(G;S_1)$ and $\Cayd(G;S_2)$ are isomorphic, but no automorphism of~$G$ carries $S_1$ to~$S_2$. Furthermore, $G$ is a \emph{non-CI group} if the subsets $S_1$ and~$S_2$ can be chosen to be closed under inverses, so we have undirected Cayley graphs $\Cay(G;S_1)$ and $\Cay(G;S_2)$.

We show that if $p$ is a prime number, and the elementary abelian $p$-group $(\ZZ_p)^r$ is a non-DCI group, then $(\ZZ_p)^{r+3}$ is a non-CI group. In most cases, we can also show that $(\ZZ_p)^{r+2}$ is a non-CI group. In particular, from Pablo Spiga's proof that $(\ZZ_p)^8$ is a non-DCI group, we conclude that $(\ZZ_3)^{10}$ is a non-CI group. This is the first example of a non-CI elementary abelian $3$-group.
\end{abstract}

\maketitle

\setcounter{section}{-1}

\section{Preliminaries}

We state some basic definitions, in order to establish our conventions.

\begin{defn}[cf.\ {\cite[pp.~302 and 307]{Li-survey}}] \label{BasicDefs}
Let $G$ be a finite group. 
	\begin{enumerate} \itemsep=\smallskipamount % not for publication !!!
	\item For any subset~$S$ of~$G$, the \emph{Cayley digraph} $\Cayd(G; S)$ is the directed graph with vertex set~$G$, and with a directed edge $g \to h$ if and only if $h \in Sg$.
	 \item If $S$ is \emph{symmetric} (i.e., if $S = S^{-1}$), then $\Cayd(G; S)$ can be viewed as an undirected graph, by replacing each pair of oppositely directed edges ($x \to y$ and $y \to x$) with an undirected edge ($x \edge y$), and replacing each directed loop ($x \to x$) with an undirected loop ($x \edge x$). This undirected graph is called a \emph{Cayley graph}, and is denoted $\Cay(G;S)$. 
	\item We often refer to $S$ as the \emph{connection set} of $\Cayd(G; S)$ or $\Cay(G; S)$.
	\item A Cayley graph $\Cay(G; S)$ is said to be a \emph{CI graph} if, for every symmetric subset~$S'$ of~$G$, such that $\Cay(G; S) \iso \Cay(G; S')$, there is an automorphism~$\alpha$ of~$G$, such that $\alpha(S) = S'$. 
	\item Similarly, a Cayley digraph $\Cayd(G; S)$ is said to be a \emph{DCI digraph} if, for every subset~$S'$ of~$G$, such that $\Cayd(G; S) \iso \Cayd(G; S')$, there is an automorphism~$\alpha$ of~$G$, such that $\alpha(S) = S'$. 
	\item \label{BasicDefs-CI}
	$G$ is a \emph{CI group} if every Cayley graph of~$G$ is a CI graph, and
	\item \label{BasicDefs-DCI}
	$G$ is a \emph{DCI group} if every Cayley digraph of~$G$ is a DCI digraph.
	\end{enumerate}
Thus, the difference between ``CI'' and ``DCI'' is whether only undirected graphs are considered (so the generating set~$S$ is required to be symmetric), or all digraphs are allowed (so $S$ can be any subset of~$G$).
\end{defn}

\begin{rem}
The terminology in~\pref{BasicDefs-CI} and~\pref{BasicDefs-DCI} is not entirely standard: some authors use ``CI'' for the concept that we call ``DCI\rlap,'' and may use a phrase such as ``CI with respect to undirected graphs'' for the concept that we call ``CI\rlap.'' The letters ``CI'' are an abbreviation of ``Cayley isomorphism'' \cite[\S3]{Li-survey}. 
\end{rem}

\section{Introduction}

Most finite groups are not CI groups. In particular, it is known that every CI group has a subgroup of index at most 24 that is a direct product of elementary abelian groups \cite[Thm.~1.2]{LiLuPalfy}. (Recall that a group is \emph{elementary abelian} if it is isomorphic to $(\ZZ_p)^r$, for some prime number~$p$, and some $r \ge 0$.)
However, it is not known which groups with this property are indeed CI groups. (See the survey \cite{Li-survey}.) Therefore, a fundamental problem in the theory of CI groups is to determine which elementary abelian groups are CI groups. Since every subgroup of a CI group is a CI group \cite[Lem.~3.2]{BabaiFrankl-I}, it suffices to determine, for each prime~$p$, the smallest natural number~$r$, such that 
$(\ZZ_p)^r$ is \emph{not} a CI group. This number is~$6$ for $p = 2$ \cite{ConderLi,Nowitz}, and it is known that the number is at least~$6$ for all~$p$ \cite{FengKovacs}, but the exact value is unknown for every $p > 2$. 

To provide an upper bound,  G.\,Somlai \cite[Thm.~2]{Somlai} showed in 2011 that if $p > 3$, then the elementary abelian group of order~$p^{2p+3}$ is not a CI group. 
However, Somlai \cite[p.~324]{Somlai} also pointed out that the case $p = 3$ remained open: it was not known whether there is an elementary abelian $3$-group that is not a CI group, even though P.\,Spiga \cite[Thm.~2]{Spiga-3grps} had proved two years earlier that the elementary abelian group of order~$3^8$ is not a DCI group \csee{SpigaEg}, and larger examples of non-DCI elementary abelian $3$-groups had been constructed previously \cite{Muzychuk-ElemAbel,Spiga-pgrps}.  To fill this gap in the literature, we show that every example of a non-DCI elementary abelian $p$-group will automatically yield an example of a (slightly larger) non-CI elementary abelian $p$-group (for the same prime~$p$), even if $p = 3$:

\begin{thm} \label{r+3}
If $(\ZZ_p)^r$ is not a DCI group \textup(and $p$ is prime\textup), then $(\ZZ_p)^{r+3}$ is not a CI group.
\end{thm}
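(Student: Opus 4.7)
The plan is to produce, from the given non-DCI witness $(S_1,S_2)$ for $G:=(\ZZ_p)^r$, a non-CI witness $(\wS_1,\wS_2)$ for $\wG:=G\times H$ with $H:=(\ZZ_p)^3$. I encode the orientation of each arrow of $\Cayd(G;S_i)$ in one auxiliary coordinate and use the other two as a rigid ``frame.'' Fix a basis $e_1,e_2,e_3$ of $H$, assume WLOG that $0\notin S_i$, and set
\[
\wS_i := \bigl\{(s,e_1),(-s,-e_1):s\in S_i\bigr\}\;\cup\;\bigl\{(0,\pm e_2),(0,\pm e_3)\bigr\}\subseteq\wG.
\]
This is symmetric; its ``lifted'' piece records the digraph $\Cayd(G;S_i)$, and its ``frame'' piece attaches a copy of the $(\ZZ_p)^2$-grid Cayley graph of $H_0:=\langle e_2,e_3\rangle$ at every vertex.

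The isomorphism direction is easy. Pick a digraph isomorphism $\varphi\colon\Cayd(G;S_1)\to\Cayd(G;S_2)$ with $\varphi(0)=0$, so that $\varphi(g')-\varphi(g)\in S_2\iff g'-g\in S_1$, and define $\widehat\varphi(g,h):=(\varphi(g),h)$. A case-check on the two kinds of edges of $\Cay(\wG;\wS_1)$ verifies that $\widehat\varphi$ is a graph isomorphism to $\Cay(\wG;\wS_2)$: on a lifted edge the digraph-iso property of $\varphi$ provides what is needed, and on a frame edge the $G$-coordinate is unchanged so the check is trivial.

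The rigidity direction — showing that no $\alpha\in\Aut(\wG)=\mathrm{GL}_{r+3}(\mathbb{F}_p)$ satisfies $\alpha(\wS_1)=\wS_2$ — is the heart of the proof and the main obstacle. My plan is a two-stage reduction. \emph{Stage one:} extract a graph-theoretic invariant of $\Cay(\wG;\wS_i)$ — such as a count of common neighbours, of $4$-cycles, or of some other local feature through a given edge — that separates frame elements from lifted elements of $\wS_i$, so that any $\alpha$ as above must send frame to frame and hence preserve $\{0\}\oplus H_0$. The point of using a two-dimensional frame (four frame elements) rather than a one-dimensional one is to supply enough local scaffolding for such a separating invariant to succeed unconditionally. \emph{Stage two:} mod out by $\{0\}\oplus H_0$ to obtain $\wG/(\{0\}\oplus H_0)\iso G\times\ZZ_p$ and an induced automorphism $\bar\alpha$ that sends the undirected encoding $\{(s,1),(-s,-1):s\in S_1\}$ of $\Cayd(G;S_1)$ to the analogous set for $S_2$. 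A block-matrix calculation (assuming WLOG $\langle S_1\rangle=G$) shows that $\bar\alpha$ must have the form $\bigl(\begin{smallmatrix}A & v\\0 & \pm1\end{smallmatrix}\bigr)$ with $\pm AS_1+v=S_2$ for some $A\in\Aut(G)$ and $v\in G$.

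The final step — ruling out a nonzero translation $v$, and thus extracting a genuine automorphism of $G$ that carries $S_1$ onto $S_2$, contradicting non-DCI — is the part I expect to be most delicate. It needs a finer use either of the frame's internal structure or of the given digraph isomorphism $\Cayd(G;S_1)\iso\Cayd(G;S_2)$ in order to constrain $\bar\alpha$ beyond the block-matrix form above. It is precisely this argument that three frame coordinates are designed to make go through without extra hypotheses, and that the introduction's remark about $(\ZZ_p)^{r+2}$ working ``in most cases'' presumably reflects being handled only by a sharper, conditional version of the same step.
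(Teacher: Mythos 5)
Your construction and the easy isomorphism direction are fine, but the proof stops exactly where the difficulty begins: the entire rigidity direction is a plan rather than an argument, and its final step meets an obstruction that your setup does not resolve. Concretely: (i) the ``separating invariant'' of Stage one is asserted to exist unconditionally but never exhibited, and the most natural candidates fail --- for $p>3$, an edge of $\Cay(\wG;\wS_1)$ from $0$ to a frame element $(0,e_2)$ and an edge from $0$ to a lifted element $(s,e_1)$ both lie on \emph{zero} triangles (no element of $\wS_1$ differs from $(0,e_2)$ or from $(s,e_1)$ by an element of $\wS_1$, since $e_1,e_2,e_3$ are independent and $-2e_1\notin\{\pm e_1\}$), so triangle/common-neighbour counts do not distinguish the two kinds of connection-set elements; (ii) the block-matrix form in Stage two presupposes that $\bar\alpha$ preserves the subgroup $G\times\{0\}$ of the quotient, which is not justified; and (iii) most seriously, even granting the conclusion $\pm AS_1+v=S_2$, a nonzero translation $v$ does \emph{not} contradict the hypothesis: non-DCI only excludes group automorphisms carrying $S_1$ to $S_2$, an affine relation with $v\neq 0$ yields no such automorphism, and the extra frame coordinates have already been killed in the quotient where $v$ lives, so it is not explained how they could rule $v$ out. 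This ``translation problem'' is genuine and is precisely why the paper never argues with the pair $(S_1,S_2)$ at all: it invokes Babai's criterion (\cref{Babai}), under which non-DCI supplies a regular subgroup $M\iso G$ of $\Aut\dX$ not conjugate to $G_R$; then $M\times A_R\times B_R$ is a regular subgroup of $\Aut\wX$, and a hypothetical conjugating automorphism $\varphi$ can be composed with a translation so that $\varphi(1)=1$, which eliminates the translation ambiguity at the source.

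The paper's construction also differs in a way that makes your Stage one tractable there: its connection set $\wS=G\cup Sb\cup A\cup Ab$ contains the whole subgroup $G$, so each coset of $G$ induces a maximal clique of size $|G|$, and a counting argument (\cref{cliques} and \cref{phiIsGood}) shows these are the only maximal cliques of that size; hence any automorphism fixing a vertex permutes the $G$-cosets and the $AB$-cosets, and its restriction to $G$ is an isomorphism from $\dX$ to $\dX$ or to $\dX^-$ (\cref{IfFixG}). The residual $\dX\to\dX^-$ case is absorbed by the hypothesis that some $\alpha\in\Aut G$ sends $S$ to $S^{-1}$ (automatic for abelian $G$ via inversion), and the exponent $r+3$ is obtained by first embedding $(\ZZ_3)^r$ into $(\ZZ_3)^{r+1}$ so that $|S\cup\{1\}|\le|G|/3$ and then applying the $r+2$ statement. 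If you wish to pursue your encoding, you should replace Stages one and two by an argument about regular subgroups of $\Aut$ of the lifted graph rather than about the connection sets themselves; as written, the proposal has a gap that its own final paragraph acknowledges but does not close.
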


Moreover, the following result shows that we can usually decrease the exponent in our answer by~$1$. (In fact, it will be explained in \fullcref{IntroRems}{r+2} that that the exponent $r + 2$ will always suffice, but this improvement relies on a paper that is still in preparation.)

\begin{thm} \label{r+2}
Assume $(\ZZ_p)^r$ is not a DCI group \textup(and $p$ is prime\textup). If either of the following is true, then $(\ZZ_p)^{r+2}$ is not a CI group:
\noprelistbreak
	\begin{enumerate}
	\item \label{r+2-not3}
	$p \neq 3$,
	or
	\item \label{r+2-3}
	there is a non-DCI Cayley digraph $\Cayd \bigl( (\ZZ_p)^r; S \bigr)$, such that $|S| < p^{r-1}$.
	\end{enumerate}
\end{thm}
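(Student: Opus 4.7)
The plan is to lift the non-DCI pair on $G := (\ZZ_p)^r$ to a non-CI pair on $G \times H$, where $H := (\ZZ_p)^2$, by attaching an orientation tag drawn from a set $F \subseteq H$ satisfying $F \cap (-F) = \emptyset$. Concretely, take $S_1, S_2 \subseteq G$ with $\Cayd(G; S_1) \iso \Cayd(G; S_2)$ but no $\Aut(G)$-image of $S_1$ equal to $S_2$; under hypothesis~\pref{r+2-3} require additionally that $|S_1| < p^{r-1}$. Define
\[T_i := (S_i \times F) \cup \bigl((-S_i) \times (-F)\bigr) \subseteq G \times H,\]
which is symmetric, and in which the two strands $S_i \times F$ and $(-S_i) \times (-F)$ do not overlap (because $F \cap (-F) = \emptyset$).

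The easy direction is to produce a graph isomorphism $\Cay(G \times H; T_1) \iso \Cay(G \times H; T_2)$. Normalize the digraph isomorphism $\varphi \colon G \to G$ so that $\varphi(0) = 0$; then $h - g \in S_1$ if and only if $\varphi(h) - \varphi(g) \in S_2$. The map $\Phi(g, h) := \bigl(\varphi(g), h\bigr)$ is a graph isomorphism: an edge on the $S_1 \times F$ strand corresponds via $\varphi$ to one on the $S_2 \times F$ strand of $T_2$, and symmetrically for the negated strand; the $H$-coordinate is untouched and the case $F \cap (-F) = \emptyset$ ensures the two strands do not need to be reconciled with each other.

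The hard direction, and the main obstacle, is the rigidity claim that no group automorphism $\alpha \in \Aut(G \times H)$ satisfies $\alpha(T_1) = T_2$. Writing $\alpha(g, h) = (Ag + Bh,\, Cg + Dh)$ in block form, the strategy is to show that $\alpha$ preserves the $G$-fiber (equivalently, the mixing block $B$ vanishes), so $\alpha$ decomposes and $A \in \Aut(G)$ must send $S_1$ to $S_2$ or to $-S_2$. Either outcome contradicts the hypothesis, since $\Cayd(G; -S_2)$ is the edge-reverse of $\Cayd(G; S_2)$ and hence digraph-isomorphic to it, so its $\Aut(G)$-orbit coincides with that of $S_2$. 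To force $\alpha$ to preserve the $G$-fiber, the plan is to recover the ``tag'' $h_2 - h_1 \in F \cup (-F)$ of each edge of $\Cay(G \times H; T_i)$ from the graph structure itself, so that every graph automorphism permutes the $H$-projection classes and, a fortiori, so does $\alpha$.

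This intrinsic tag recovery is where the two alternative hypotheses enter. Under \pref{r+2-not3}, with $p \neq 3$, take $F = \{f_1, f_2\}$ a basis of $H$: the four tag classes $\pm f_1, \pm f_2$ can be distinguished combinatorially up to a controlled set of expected symmetries of $H$, and the hypothesis $p \neq 3$ is exactly what rules out pathological automorphisms of $H$ (arising from the extra relations in characteristic~$3$ among the tags) that would otherwise let $\alpha$ conflate a $G$-translation with a change of tag. Under \pref{r+2-3}, the sparsity bound $|S_1| < p^{r-1}$ keeps $\Cay(G \times H; T_1)$ thin enough that local degree and common-neighbor counts pin down the $H$-projection of each edge outright, again forcing $\alpha$ to split along $G \oplus H$. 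The bulk of the technical work is in verifying this intrinsic identification in each case; by contrast, in \cref{r+3} an additional basis vector $f_3$ in $(\ZZ_p)^3$ supplies enough structural slack that the identification is automatic and no hypothesis on~$p$ or~$|S|$ is needed.
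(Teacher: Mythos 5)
Your overall strategy---exhibit two explicit symmetric connection sets $T_1, T_2$ in $G \times H$ whose Cayley graphs are isomorphic but not related by a group automorphism---is a legitimate route in principle, and your ``easy direction'' is correct: $\Phi(g,h) = (\varphi(g),h)$ is indeed a graph isomorphism, and your observation that $-S_2$ lies in the same $\Aut(G)$-orbit as $S_2$ (via negation) is also correct. However, the proof has a genuine gap: the entire rigidity step is asserted rather than proved, and it is precisely where all of the content of the theorem lives. Concretely, your construction admits an unaddressed failure mode. Take $\alpha(g,h) = (Ag + Bh,\, h)$ with $A \in \Aut(G)$ and $B \colon H \to G$ a nonzero homomorphism whose kernel contains $f_1 - f_2$ (possible since $H$ has rank $2$). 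Then $\alpha(T_1) = T_2$ as soon as $A(S_1) + Bf_1 = S_2$, i.e., as soon as some \emph{nonzero translate} of $S_2$ lies in the $\Aut(G)$-orbit of $S_1$. The non-DCI hypothesis only forbids $S_2$ itself from lying in that orbit, so nothing you have said rules this out; your claim that ``$\alpha$ decomposes and $A$ must send $S_1$ to $S_2$ or $-S_2$'' simply does not follow from $B$-block analysis without substantial further argument. Likewise, the claimed mechanisms by which the two hypotheses enter---``extra relations in characteristic $3$ among the tags'' for \pref{r+2-not3}, and ``local degree and common-neighbor counts'' for \pref{r+2-3}---are not arguments but placeholders; no identification of the tag classes is actually carried out, and no reason is given why $p=3$ would be the exceptional case for your construction.

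For comparison, the paper does not build two connection sets at all. It works with a single Cayley graph $\wX = \Cay(\wG; \wS^{\pm1})$ on $\wG = G \times \ZZ_p \times \ZZ_p$, where $\wS = G \cup Sb \cup A \cup Ab$ deliberately contains the whole subgroup $G$, and invokes Babai's criterion (\cref{Babai}): the non-DCI hypothesis supplies a regular subgroup $M \le \Aut\dX$ not conjugate to $G_R$, whence $M \times A_R \times B_R$ is a regular subgroup of $\Aut\wX$ that cannot be conjugate to $\wG_R$. The rigidity is obtained by classifying the \emph{maximal cliques} of $\wX$ (\cref{cliques}): cosets of $G$ are exactly the maximal cliques of cardinality $|G|$, which forces any automorphism fixing $1_{\wG}$ to preserve the $G$-fibration and the $AB$-fibration (\cref{phiIsGood}), and hence to restrict to an isomorphism $\dX \to \dX$ or $\dX \to \dX^-$ (\cref{IfFixG}). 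The hypotheses $p \neq 3$ versus $|S \cup \{1_G\}| \le |G|/3$ enter only to bound the cardinalities of the remaining maximal cliques (when $n = 3$ the cosets $Gb$ and $Gb^{-1} = Gb^2$ are mutually adjacent, so the ``mixed'' cliques can be three layers deep and one needs $|S|$ small). If you want to salvage your approach, you must (i) fix $F$ explicitly, (ii) prove an intrinsic recovery statement strong enough to kill all mixing blocks including the shear above, and (iii) show how each hypothesis is actually used---at which point you will likely find yourself reconstructing something close to the paper's clique analysis.
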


\begin{figure}[t] % is figure placement ok ? !!!
\begin{align*}
S_{0,0,0} &= \{v_1, v_3, v_4, v_5\}\\
S_{1,0,0} &= \{w_1+av_1+bv_2+cv_5 : a,b,c \in \mathbb Z_3\}\\
S_{0,1,0} &= \{w_2+av_1+bv_3+cv_4+dv_5 : a,b,c,d \in \mathbb Z_3\}\\
S_{0,0,1} &= \{w_3+av_2+bv_3+cv_4+dv_5 : a,b,c,d \in \mathbb Z_3\}\\
S_{1,1,0} &= \{w_1+w_2+av_1+bv_2+cv_3+bv_4+dv_5 : a,b,c,d \in \mathbb Z_3\}\\
S_{1,0,1} &= \{w_1+w_3+av_1+bv_2+av_3+cv_4+dv_5 : a,b,c,d \in \mathbb Z_3\}\\
S_{0,1,1} &= \{w_2+w_3+av_1+bv_2+cv_3+dv_4-(a+b)v_5 : a,b,c,d \in \mathbb Z_3\}\\
S_{1,1,1} &= \{w_1+w_2+w_3+av_1+bv_2+cv_3+dv_4+(-a-b+c+d)v_5 : a,b,c,d \in \mathbb Z_3\}\\
S_{2,1,1} &= \{2w_1+w_2+w_3+av_1+bv_2+cv_3+dv_4-(a+b+c+d)v_5 : a,b,c,d \in \mathbb Z_3\}\\
S_{1,2,1} &= \{w_1+2w_2+w_3+av_1+bv_2+cv_3+dv_4+(a+b-c+d)v_5 : a,b,c,d \in \mathbb Z_3\}\\
S_{1,1,2} &= \{w_1+w_2+2w_3+av_1+bv_2+cv_3+dv_4+(a+b+c-d)v_5 : a,b,c,d \in \mathbb Z_3\}
%\\
%S &= S_{2,1,1} \ \cup \ S_{1,2,1} \ \cup \ S_{1,1,2} \ \cup \bigcup_{0 \le i,j,k \le 1} S_{i,j,k}.
\end{align*}
\caption{Eleven sets $S_{i,j,k}$ whose union is the connection set for P.\,Spiga's non-DCI Cayley digraph of $(\ZZ_3)^8$.}
\label{SpigaSets}
\end{figure}

For the reader's convenience, we recall the detailed statement of the following important example (that was mentioned above).

\begin{thm}[{Spiga \cite[proof of Thm.~2]{Spiga-3grps}}] \label{SpigaEg}
Let $\{w_1, w_2, w_3, v_1, v_2, v_3, v_4, v_5\}$ be a generating set of~$(\ZZ_3)^8$, and let $S$ be the union of the  sets $S_{i,j,k}$ that are defined in \cref{SpigaSets}.
Then $\Cayd \bigl( (\ZZ_3)^8 ; S \bigr)$ is a non-DCI digraph.
\end{thm}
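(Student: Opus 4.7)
The general strategy for proving a Cayley digraph is non-DCI requires two things: a second connection set $S'$ together with a digraph isomorphism $\varphi \colon \Cayd((\ZZ_3)^8; S) \to \Cayd((\ZZ_3)^8; S')$, and a proof that no element of $\Aut((\ZZ_3)^8) = GL_8(\ZZ_3)$ maps $S$ to $S'$. My plan follows this framework, guided by the structure of the eleven sets in \cref{SpigaSets}.

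First, I would exploit the decomposition $G = W \oplus U$, where $W = \langle w_1, w_2, w_3 \rangle$ and $U = \langle v_1, v_2, v_3, v_4, v_5 \rangle$. Each $S_{i,j,k}$ sits inside the coset $iw_1 + jw_2 + kw_3 + U$; apart from the small set $S_{0,0,0}$, it is an affine subspace of dimension $3$ or $4$ inside that coset. The sets indexed by $(1,1,1)$, $(2,1,1)$, $(1,2,1)$, $(1,1,2)$ are the most intricate: in each, the coefficient of $v_5$ is a specific non-trivial linear form in $a, b, c, d$, and the four instances are related to one another by patterned sign flips. I would construct $S'$ by deliberately modifying these four linear forms --- for example, flipping a sign or permuting coordinates in one of them --- in a way tailored to be realised by a non-affine permutation of $G$.

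The second step is to exhibit the graph isomorphism $\varphi$. A natural ansatz is a permutation of $G$ that restricts to an affine map on each coset $c + U$, with the restriction varying with $c \in W$. The condition $\varphi(Sx) = S' \varphi(x)$ then becomes, for each pair of $U$-cosets, a linear-algebra identity matching the appropriate $S_{i,j,k}$ to $S'_{i',j',k'}$ after conjugation by $\varphi$. If the per-coset affine twists are chosen compatibly with the modifications defining $S'$, these identities should reduce to a finite collection of routine verifications.

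The final step --- and the main obstacle --- is ruling out a group automorphism. Any $\alpha \in GL_8(\ZZ_3)$ taking $S$ to $S'$ must respect every $GL$-invariant attached to the decomposition into $U$-cosets: the dimension distribution of the fibres, the incidence structure among the $S_{i,j,k}$ as affine subspaces, and higher-order data such as quadratic invariants read off from triples of cosets. The modification producing $S'$ must be chosen so that at least one such invariant distinguishes $S$ from $S'$; I expect the quadratic relations encoded in the four asymmetric $v_5$-coefficient forms to be the source of the distinction, since these are exactly the places where the construction departs from being $GL$-symmetric. Identifying and computing this invariant, and then verifying both that $\varphi$ respects it appropriately and that no linear map does, is the crux of the argument.
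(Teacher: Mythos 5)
The paper does not actually prove \cref{SpigaEg}: it is imported verbatim from Spiga \cite[proof of Thm.~2]{Spiga-3grps}, and the authors explicitly note that their short route to \cref{3^10} depends on \emph{assuming} this result. So there is no in-paper argument to compare yours against; the relevant comparison is with Spiga's original proof, whose framework (by \cref{Babai}) is to exhibit a regular subgroup of $\Aut\bigl(\Cayd((\ZZ_3)^8;S)\bigr)$ isomorphic to $(\ZZ_3)^8$ but not conjugate to the right regular representation --- equivalent to, though packaged differently from, your ``second connection set'' formulation.

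The substantive problem is that your proposal is a plan, not a proof. Every load-bearing object is left unconstructed: you never write down $S'$, never define the permutation $\varphi$ (only an ansatz for its shape), and never identify the $GL_8(\ZZ_3)$-invariant that is supposed to separate $S$ from $S'$. The phrases ``should reduce to a finite collection of routine verifications'' and ``identifying and computing this invariant \dots is the crux of the argument'' concede that the two genuinely hard steps --- verifying that a specific non-affine permutation is an isomorphism of these particular $760$-element connection sets, and ruling out all of $GL_8(\ZZ_3)$ --- have not been carried out. The delicate $v_5$-coefficient forms in $S_{1,1,1}, S_{2,1,1}, S_{1,2,1}, S_{1,1,2}$ are precisely where such an attempt can fail: a modification that admits a graph isomorphism tends also to admit a linear one, and vice versa, so the existence of a choice threading that needle is exactly the content of Spiga's theorem and cannot be asserted on the strength of an ansatz. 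As written, the proposal establishes nothing beyond a correct statement of the general strategy.
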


 We see from \cref{SpigaSets} that the outvalency of this Cayley digraph (i.e., the cardinality of the connection set~$S$) is $4 + 3^3 + 9\cdot 3^4 = 760$. Since $760 < 3^7$ ($= 2187$), we immediately deduce the following from \fullcref{r+2}{3}:

\begin{cor} \label{3^10}
$(\ZZ_3)^{10}$ is not a CI group.
\end{cor}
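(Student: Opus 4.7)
The plan is to apply Theorem~\ref{r+2}\pref{r+2-3} directly to Spiga's example from Theorem~\ref{SpigaEg}, with $p = 3$ and $r = 8$. The first hypothesis of Theorem~\ref{r+2}---that $(\ZZ_p)^r$ is not a DCI group---is furnished by Theorem~\ref{SpigaEg}. Since $p = 3$, the simpler alternative~\pref{r+2-not3} is unavailable, so I would need to verify the cardinality condition in alternative~\pref{r+2-3}: Spiga's connection set must have size strictly less than $p^{r-1} = 3^7 = 2187$.

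To verify this cardinality, I would tally the eleven pieces $S_{i,j,k}$ listed in Figure~\ref{SpigaSets}. These pieces are pairwise disjoint, because they are distinguished by their $(w_1, w_2, w_3)$-coordinates (read off from the triple subscript), and the $w_i$ are linearly independent from the $v_i$. Then I would count contributions: $S_{0,0,0}$ lists $4$ elements explicitly; $S_{1,0,0}$ has three free parameters in $\ZZ_3$, giving $3^3 = 27$ elements; and each of the remaining nine pieces has four free parameters, giving $3^4 = 81$ elements apiece. The total is $4 + 27 + 9 \cdot 81 = 760$, comfortably below the threshold $2187$.

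With this numerical check in hand, Theorem~\ref{r+2}\pref{r+2-3} delivers the conclusion that $(\ZZ_3)^{8+2} = (\ZZ_3)^{10}$ is not a CI group. There is no genuine obstacle here, since all the difficulty has been absorbed into Theorem~\ref{r+2} (the main result of the paper) and into Spiga's original construction. The only place care is needed is the parameter count---$S_{1,0,0}$ has only three free parameters while the remaining eight nontrivial pieces have four---but even substantially larger slack would leave the argument intact, since Spiga's outvalency of $760$ is far below the threshold $3^7 = 2187$.
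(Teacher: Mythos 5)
Your proposal is correct and is essentially identical to the paper's own derivation: both apply Theorem~\ref{r+2}\pref{r+2-3} to Spiga's example with $p=3$, $r=8$, after computing $|S| = 4 + 3^3 + 9\cdot 3^4 = 760 < 3^7$. The extra care you take in checking that the eleven pieces are pairwise disjoint is a reasonable (and correct) refinement of the paper's tacit count.
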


\begin{rems} \label{IntroRems}
\leavevmode\noprelistbreak
	\begin{enumerate} \itemsep=\smallskipamount
	\item By modifying Spiga's example, the second author \cite{Morris-3^8} has shown that $(\ZZ_3)^8$ is not a CI group. Therefore, \cref{3^10} may be of limited interest.  However, the argument in \cite{Morris-3^8} is  longer (and more intricate) than the proofs here. (Part of the reason our proof of \cref{3^10} is short is that we assume it is known that Spiga's example is not a DCI digraph, but \cite{Morris-3^8} also makes this assumption.)

	\item \label{IntroRems-r+2}
	If we assume the above-mentioned fact that $(\ZZ_3)^8$ is not a CI group, then the conclusion of \cref{r+2} is true even without assuming \pref{r+2-not3} or~\pref{r+2-3}: 
	\[ \text{\emph{if $(\ZZ_p)^r$ is not a DCI group, then $(\ZZ_p)^{r+2}$ is not a CI group.}} \]
To see this, note that we may assume $p = 3$ (for otherwise \fullcref{r+2}{not3} applies). However, P.\,Spiga \cite[Thm.~1]{Spiga-3grps} showed that $(\ZZ_3)^5$ is a DCI group, so we must have $r \ge 6$. Then $r + 2 \ge 8$. If we assume that $(\ZZ_3)^8$ is not a CI group, then this implies $(\ZZ_p)^{r+2}$ is not a CI group, as desired. Note that part~\pref{r+2-3} of \cref{r+2} was not used in this argument. In fact, knowing that $(\ZZ_3)^8$ is not a CI group makes this part of the \lcnamecref{r+2} superfluous.

	\item \fullCref{r+2}{not3} may have a better chance of being useful in the future. For example, if it is ever proved that there is a constant~$C$, such that, for every prime~$p$, the elementary abelian group of order~$p^C$ is not a DCI group, then it will immediately follow that there is a constant~$C'$, such that the elementary abelian group of order~$p^{C'}$ is not a CI group.
	\end{enumerate}
\end{rems}

Our construction of a non-CI Cayley graph $\Cay(\wG; \wS)$ from a non-DCI Cayley digraph $\Cayd(G; S)$ is in \cref{XhatDefn}. It is a fairly straightforward adaptation of the well-known observation that if $\widetilde X$ is the bipartite double cover of a digraph~$\dX$, i.e., if:
	\begin{itemize}
	\item $V(\widetilde X) = V(\dX) \times \{0,1\}$, 
	and
	\item $(x,0)$ is adjacent to $(y,1)$ in~$\widetilde X$ $\iff$ there is a directed edge from $x$ to~$y$ in~$\dX$,
	\end{itemize}
and we define a permutation $\widetilde\pi$ of $V(\widetilde X)$ by $\widetilde\pi(x,i) = \bigl( \pi(x), i \bigr)$, where $\pi$ is a  permutation of $V(\dX)$, then
	\[ \text{$\widetilde\pi$ is an automorphism of the graph~$\widetilde X$ 
		$\iff$ $\pi$ is an automorphism of the directed graph~$\dX$} . \]

\begin{ack}
The second author was partially supported by the Natural Science and Engineering Research Council of Canada (grant RGPIN-2017-04905).
\end{ack}

\section{Proofs of the main results} \label{MainPfSect}

This \lcnamecref{MainPfSect} proves \cref{MainThm}, which easily implies the main results that were stated in the Introduction % !!!
(i.e.,  \cref{r+3,r+2}).
Although elementary abelian groups are our primary interest, the result is stated more generally, because assuming that the group~$G$ is (elementary) abelian would not simplify the argument to any significant extent.

\begin{notation}
If $\dX$ is a digraph, then $\dX^-$ is the digraph that is obtained from~$\dX$ by reversing all of its directed edges.
\end{notation}

\vbox{% not for publication version !!! - do not allow a page break in the statement of the theorem
\begin{thm} \label{MainThm}
Assume 
	\begin{itemize}
	\item $G$ is a finite group,
	\item $\dX = \Cayd(G; S)$ is a non-DCI Cayley digraph,
	\item $n \ge 3$
	and
	$k = \begin{cases} 2 & \text{if $n > 3$}; \\ 3 & \text{if $n = 3$} , \end{cases}$
	\item $nk \neq |G|$,
	\item either $n > 3$ or $|S \cup \{1_G\}| \le |G|/3$,
	and
	\item either $\dX \not\cong \dX^-$ or there is an automorphism~$\alpha$ of~$G$, such that $\alpha(S) = S^{-1}$.
	\end{itemize}
Then $G \times (\ZZ_n)^2$ is not a CI group.
\end{thm}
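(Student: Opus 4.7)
The plan is to mimic the bipartite double cover construction sketched just before the statement of \cref{MainThm}, but to modify it so that the ``lift'' of the Cayley digraph $\dX = \Cayd(G;S)$ becomes an undirected Cayley graph $\wX = \Cay(\wG; \wS)$ of $\wG = G \times (\ZZ_n)^2$. The role of the factor $(\ZZ_n)^2$, with $n \ge 3$, is to provide a ``direction tag'' that is rigid enough to survive the passage from directed to undirected: if $t$ is an element of $(\ZZ_n)^2$ with $t \ne -t$, then a tag of $t$ versus $-t$ on an edge can potentially be reconstructed from the graph structure, whereas with $n = 2$ the tag would be self-inverse and the direction would be lost. I therefore expect the construction in \cref{XhatDefn} to produce, from~$S$, a symmetric set of roughly the form $\wS = (S \times \{t\}) \cup (S^{-1} \times \{-t\}) \cup (\{1_G\} \times T_0)$ for a well-chosen $t \in (\ZZ_n)^2$ and symmetric $T_0 \subseteq (\ZZ_n)^2$ that encodes a recognizable ``skeleton'' of the factor $(\ZZ_n)^2$, with a second coordinate serving to pin down the factor rigidly.

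Given two connection sets $S_1, S_2 \subseteq G$ with $\Cayd(G;S_1) \iso \Cayd(G;S_2)$ witnessed by some bijection $\pi$ of~$G$ (not necessarily a group homomorphism), one direction is routine: the vertex bijection $\widetilde\pi \colon (g,v) \mapsto (\pi(g), v)$ on $\wG$ preserves the three pieces of~$\wS$ and therefore lifts $\pi$ to an isomorphism $\Cay(\wG; \wS_1) \iso \Cay(\wG; \wS_2)$, exactly as in the paragraph preceding \cref{MainThm}. This handles the ``isomorphic as unlabelled graphs'' half of the non-CI conclusion.

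The main work is then to show that no group automorphism $\alpha$ of $\wG$ satisfies $\alpha(\wS_1) = \wS_2$. My strategy would be to first use the cardinality and structural hypotheses on~$G$, $n$, $k$ and~$S$ to force any such~$\alpha$ to respect the direct-product decomposition $\wG = G \times (\ZZ_n)^2$, i.e., to preserve the subgroup $G \times \{0\}$ (and hence to induce an automorphism $\beta$ of~$G$). The hypothesis $nk \ne |G|$ should prevent a confusion of the two direct factors by a counting argument, and the bound $|S \cup \{1_G\}| \le |G|/3$ in the borderline case $n = 3$, $k = 3$ is needed to distinguish the ``$S$-part'' of $\wS$ from the ``skeleton'' part. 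Once $\alpha$ splits, its action on the $(\ZZ_n)^2$ factor either fixes the tag~$t$ or sends it to~$-t$, so $\beta(S_1)$ must be either $S_2$ or $S_2^{-1}$. The first alternative directly contradicts the non-DCI hypothesis, and the second is ruled out by the final dichotomy in the theorem: either $\dX \not\iso \dX^-$ makes $\dX_1 \iso \dX_2^-$ impossible, or an automorphism $\gamma \in \Aut(G)$ with $\gamma(S) = S^{-1}$ can be composed with~$\beta$ to produce an automorphism carrying $S_1$ to $S_2$, again contradicting non-DCI.

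The main obstacle is the rigidity step: forcing the abstract group automorphism $\alpha$ of the direct product $\wG$ to respect the factorization. In general, $\Aut(G \times H)$ does not split as $\Aut(G) \times \Aut(H)$, so controlling $\alpha$ requires exploiting the combinatorial structure of $\wS$ rather than purely group-theoretic arguments. It is precisely for this rigidity step that the technical hypotheses on cardinalities and the dichotomy between $n = 3$ and $n > 3$ appear in the statement; finding the right skeleton $T_0$ so that these hypotheses suffice is, I expect, the delicate part of the construction in \cref{XhatDefn}.
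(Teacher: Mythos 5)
Your overall architecture is legitimate, but it is not the paper's, and the part you defer is the entire technical content. The paper does not construct two connection sets and compare them under $\Aut(\wG)$ at all: it invokes Babai's criterion (\cref{Babai}), takes a regular subgroup $M \le \Aut\dX$ with $M \iso G$ that is not conjugate to $G_R$, observes that $M \times A_R \times B_R$ is a regular subgroup of $\Aut\wX$ isomorphic to $\wG$, and then must show that no \emph{graph} automorphism of $\wX$ conjugates this subgroup to $\wG_R$. The technical heart is therefore a statement about arbitrary automorphisms of the graph $\wX$, not about group automorphisms of $\wG$: \cref{cliques} classifies the maximal cliques of $\wX$ (which is why the paper's connection set $\wS = G \cup Sb \cup A \cup Ab$ of \cref{XhatDefn} contains the whole subgroup $G$ and the whole sets $A$ and $Ab$ --- these create recognizable maximal cliques of sizes $|G|$ and $nk$), and \cref{phiIsGood} and \cref{IfFixG} use this to show that any $\varphi \in \Aut\wX$ fixing $1_{\wG}$ preserves cosets of $G$ and of $AB$, hence restricts on $G$ to an automorphism of $\dX$ or an isomorphism $\dX \to \dX^-$. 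The hypotheses $nk \neq |G|$ and $|S \cup \{1_G\}| \le |G|/3$ enter precisely there, to separate the clique types by cardinality.

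Your route --- exhibit $\wS_1, \wS_2$, lift the digraph isomorphism, and rule out $\alpha \in \Aut(\wG)$ with $\alpha(\wS_1) = \wS_2$ --- would also suffice by the definition of a CI graph, and in principle requires controlling only group automorphisms rather than all graph automorphisms. But you do not carry out the rigidity step, and you say so yourself; that step is not a routine detail but the whole theorem (for $G = (\ZZ_p)^r$ one has $\Aut(\wG) = GL(r+2,p)$, which mixes the factors freely, and the only leverage is the combinatorics of the chosen $\wS_i$, which you have not pinned down). There is also a concrete flaw in your endgame as stated: even granting that $\alpha$ preserves $G \times \{0\}$ and sends the tag $t$ to $\pm t$, what you obtain is $\alpha(s,t) = (\beta(s)c, \pm t)$ for some fixed $c \in G$, i.e., $\beta(S_1)\,c = S_2^{\pm 1}$ --- a \emph{translate} of $\beta(S_1)$, which does not contradict the non-DCI hypothesis unless you additionally force $c = 1_G$ (for instance by making the skeleton $\{1_G\} \times T_0$ intrinsically recognizable and requiring $T_0$ to generate $(\ZZ_n)^2$). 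Your use of the final dichotomy hypothesis to dispose of the case $\beta(S_1) = S_2^{-1}$ is correct and matches the paper's last step. As it stands, however, the proposal is a plan with the decisive lemma missing, not a proof.
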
}
\setcounter{copythm}{\arabic{equation}} % !!!

Before proving this \lcnamecref{MainThm}, let us show that it contains \cref{r+2} as a special case, and then derive \cref{r+3} from this latter result.

\begin{proof}[\bf Proof of \cref{r+2}]
For $p = 2$, it is well known that $(\ZZ_2)^r$ is a CI group if and only if it is a DCI group. (Every element of $(\ZZ_2)^r$ is equal to its inverse, so every generating set is symmetric. Hence, every Cayley digraph of $(\ZZ_2)^r$ is also a Cayley graph.) Therefore, we may assume $p > 2$.

Let $G = (\ZZ_p)^r$ and $n = p$. Since $(\ZZ_p)^r$ is not a DCI group, there is a non-DCI Cayley digraph $\dX = \Cayd(G; S)$.
Also note that, since $G$ is abelian, the function $\alpha(g) = g^{-1}$ is a group automorphism, and it obviously has the property that $\alpha(S) = S^{-1}$.

Assume, for the moment, that $p > 3$. Then $k = 2$ is not a power of~$p$, so it is obvious that $nk \neq p^r = |G|$. Therefore, all of the hypotheses of \cref{MainThm} are satisfied, so $G \times (\ZZ_p)^2$ is not a CI group.

We now assume $p = 3$. In this case, Condition~\fullref{r+2}{3} must apply, so we may assume $|S| \le 3^{r-1} - 1$. Then
	\[ |S \cup \{1_G\}| \le |S| + 1 \le (3^{r-1} - 1) + 1 = 3^{r-1} = |G|/3 . \]
Also, it is well known that $(\ZZ_p)^2$ is a DCI group (in fact, even $(\ZZ_p)^5$ is a DCI group \cite{FengKovacs}), so $|G| \neq 3^2 = 3 \cdot 3 = nk$. 
Therefore, all of the hypotheses of \cref{MainThm} are satisfied again, so $G \times (\ZZ_p)^2$ is not a CI group.
\end{proof}

\begin{proof}[\bf Proof of \cref{r+3}]
We may assume $p = 3$, for otherwise \fullcref{r+2}{not3} applies. Then, by assumption, there is a non-DCI, Cayley digraph $\Cayd \bigl( (\ZZ_3)^r; S)$. Via the natural embedding of $(\ZZ_3)^r$ in~$(\ZZ_3)^{r+1}$, we also have the Cayley digraph $\Cayd \bigl( (\ZZ_3)^{r+1}; S)$, which is also non-DCI. Since $S \cup \{1\} \subseteq (\ZZ_3)^r$, we have 
	$|S \cup \{1\}| \le |(\ZZ_3)^r| = |(\ZZ_3)^{r+1}| /3$,
so we conclude from \fullcref{r+2}{3} (with $r + 1$ in the place of~$r$) that $(\ZZ_3)^{(r+1) + 2}$ is not a CI group. 
\end{proof}

The remainder of this \lcnamecref{MainPfSect} will prove \cref{MainThm}. To get started, we fix some notation.

\begin{notation} \label{XhatDefn}
Let 
	\begin{itemize}
	\item $\dX = \Cayd(G; S)$ be a Cayley digraph of a nontrivial finite group~$G$, such that $S \neq S^{-1}$ (so $\dX$ is not an undirected graph),
	\item $n \ge 3$
		and
		 $k = \begin{cases} 2 & \text{if $n > 3$}; \\ 3 & \text{if $n = 3$} , \end{cases}$
	\item $A$ be a group of order~$n$,
	\item $B = \langle b \rangle$ be a \textup(multiplicative\textup) cyclic group of order~$n$,
	\item $\wG = G \times A \times B$ (so $G$, $A$, and~$B$ can be naturally identified with subgroups of~$\wG$ that centralize each other),
	\item $\wS = G \cup Sb \cup A \cup Ab$,
	and
	\item $\wX = \Cay \bigl( \wG, \wS^{\pm 1} \bigr)$.
	\end{itemize}
Note that, since $G$, $A$, and~$B$ are normal subgroups of~$\wG$, there is no need to specify whether cosets of these subgroups are right cosets or left cosets.
\end{notation}

\begin{rem} 
In our applications, we will take $G = (\ZZ_p)^r$ and $A = B = \ZZ_p$, for some prime~$p$. To prove \cref{MainThm}, we will show that if $\dX$ is not a DCI digraph, and some minor technical conditions are satisfied, then $\wX$ is not a CI graph.
\end{rem}

\begin{notation}[{\cite[\S2]{Babai-IsoProb}}]
For any group~$H$, we use $H_R$ to denote the \emph{right regular representation} of~$H$. This consists of all permutations of~$H$ that have the form $x \mapsto xh$, for some $h \in H$.
\end{notation}

The following well known, fundamental \lcnamecref{Babai} shows that being a CI graph (or CI digraph) is a property of the automorphism group of the graph (or digraph).

\begin{thm}[Babai {\cite[Lem.~3.1]{Babai-IsoProb}}, or {\cite[Thm.~4.1]{Li-survey}}] \label{Babai}
A Cayley graph \textup(or Cayley digraph\textup)~$Y$ of a group~$H$ is a CI graph \textup(or DCI digraph\textup) if and only if the conjugates of~$H_R$ are the only subgroups of $\Aut Y$ that are isomorphic to~$H$ and act sharply transitively on $V(Y)$.
\end{thm}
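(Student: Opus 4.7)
The plan is to prove each direction of the biconditional by translating the CI (or DCI) property of $Y$ into a statement about regular subgroups of $\Aut Y$. For the easier implication, I assume every regular subgroup of $\Aut Y$ isomorphic to $H$ is conjugate to $H_R$. Given any isomorphism $\psi\colon Y \to Y'$ with $Y' = \Cay(H;T')$ (or the digraph analogue), I would observe that since $H_R \le \Aut Y'$ acts regularly, the conjugate $\psi^{-1} H_R \psi$ is a regular subgroup of $\Aut Y$ isomorphic to $H$, so by hypothesis it equals $\delta H_R \delta^{-1}$ for some $\delta \in \Aut Y$. Then $\psi\delta$ lies in the normalizer of $H_R$ in $\operatorname{Sym}(H)$, which is the holomorph $H_R \rtimes \Aut(H)$. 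Writing $\psi\delta = \rho_c \alpha$ with $\rho_c \in H_R$ and $\alpha \in \Aut(H)$, and noting that $\rho_c \in \Aut Y'$, the composition $\alpha = \rho_c^{-1}\psi\delta$ is a graph isomorphism $Y \to Y'$ that happens to lie in $\Aut(H)$; hence $\alpha(T) = T'$.

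For the harder implication, I assume $Y$ is a CI graph (or DCI digraph), and let $K \le \Aut Y$ be regular with $K \cong H$, fixing a group isomorphism $\phi\colon H \to K$. My strategy is to first build a permutation $\gamma$ of $V(Y)$ that conjugates $H_R$ to $K$ inside $\operatorname{Sym}(V(Y))$, and then use the CI property to correct $\gamma$ into an element of $\Aut Y$. Identifying $V(Y) = H$ with basepoint $1_H$, I would try $\gamma(h) := \phi(h^{-1})(1_H)$. A direct computation, using that $\phi$ is a homomorphism and $K$ acts by graph automorphisms on $V(Y)$, should yield $\gamma \rho_{h_0} \gamma^{-1} = \phi(h_0^{-1})$ for each $h_0 \in H$, so $\gamma H_R \gamma^{-1} = K$ in $\operatorname{Sym}(V(Y))$; the inversion in the definition of $\gamma$ is essential, as without it the same computation would deliver an anti-homomorphism into $K$. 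Next, because $K \le \Aut Y$, the pullback along $\gamma$ of the edge set of $Y$ is $H_R$-invariant, so it defines a Cayley (di)graph $\Cay(H;T'')$, with $\gamma$ an isomorphism $\Cay(H;T'') \to Y$. The CI hypothesis then supplies $\alpha \in \Aut(H)$ with $\alpha(T) = T''$; since $\alpha$ is itself an isomorphism $Y \to \Cay(H;T'')$, we get $\gamma\alpha \in \Aut Y$, and since $\alpha$ normalizes $H_R$ we conclude $(\gamma\alpha) H_R (\gamma\alpha)^{-1} = \gamma H_R \gamma^{-1} = K$.

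The main obstacle I anticipate is the combined maneuver in the harder direction: guessing the right formula for $\gamma$ so that the inversion is placed to yield a genuine homomorphism into $K$, and then using the CI hypothesis as the only available lever to upgrade $\gamma$ from a permutation of $V(Y)$ to an actual graph automorphism, all while preserving the conjugation relation between $H_R$ and $K$.
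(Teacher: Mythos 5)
The paper does not prove this statement --- it is quoted from Babai \cite{Babai-IsoProb} (see also \cite{Li-survey}) --- so there is no internal proof to compare against. Your argument is correct and is essentially the standard proof of Babai's criterion: in the easy direction you use that the normalizer of $H_R$ in $\operatorname{Sym}(H)$ is the holomorph $H_R \rtimes \Aut(H)$ to extract a group automorphism from the graph isomorphism; in the harder direction your map $\gamma(h) = \phi(h^{-1})(1_H)$ does satisfy $\gamma \rho_{h_0} \gamma^{-1} = \phi(h_0^{-1})$, hence $\gamma H_R \gamma^{-1} = K$, the pulled-back edge set is $H_R$-invariant and so is a Cayley (di)graph by Sabidussi's observation, and the CI hypothesis supplies the automorphism $\alpha$ needed to replace $\gamma$ by the genuine graph automorphism $\gamma\alpha$, which still conjugates $H_R$ to $K$ because $\alpha$ normalizes $H_R$. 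Both directions are complete; the only steps left implicit (the holomorph identification and the fact that a graph on vertex set $H$ admitting $H_R$ as automorphisms is a Cayley graph) are standard and correctly invoked.
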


Adding a directed loop at every vertex of a digraph does not affect the automorphism group of the digraph. Therefore, the following causes no loss of generality:

\begin{assump} \label{1inS}
Assume $1_G \in S$, which means that $\Cayd(G; S)$ has a directed loop at every vertex.
\end{assump}

\begin{rem}
Since $b \in Ab$, we know that $b \in \wS$, even without \cref{1inS}. Therefore, adding $1_G$ to~$S$ does not change $\wS$ at all. The reason for making \cref{1inS} is to ensure that $Sb$ is the set of all outneighbours of~$1_{\wG}$ that are in the coset $Gb$. This avoids needing to treat the vertex~$b$ as a special case when looking at the outneighbours of~$1_G$.
\end{rem}

The following simple result provides a crucial connection between $\Aut \dX$ and $\Aut \wX$. 

\begin{prop} \label{IfFixG}
Suppose $\varphi$ is an automorphism of~$\wX$, such that
	\begin{enumerate}
	\item  \label{IfFixG-1}
	$\varphi(1) = 1$,
	\item \label{IfFixG-cosetG}
	$\varphi$ maps each coset of $G$ to a coset of~$G$,
	and
	\item \label{IfFixG-cosetAB}
	$\varphi$ maps each coset of $AB$ to a coset of~$AB$.
	\end{enumerate}
Then the restriction of~$\varphi$ to~$G$ is either 
	an automorphism of~$\dX$ or an isomorphism from $\dX$ to $\dX^-$.
\end{prop}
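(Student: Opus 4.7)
The plan is to exploit both coset-preservation hypotheses to decompose $\varphi$ as a ``product'' of two pieces — one on $G$ and one on $AB$ — and then to read off the conclusion from whether the $AB$-piece sends $b$ to $b$ or to $b^{-1}$.

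First, since $\wG = G \times A \times B$, the subgroups $G$ and $AB$ satisfy $G \cap AB = \{1_{\wG}\}$, so every element of $\wG$ has a unique expression as $xy$ with $x \in G$ and $y \in AB$; moreover $\{xy\} = Gy \cap ABx$. Hypothesis~\pref{IfFixG-1} together with Hypotheses~\pref{IfFixG-cosetG} and~\pref{IfFixG-cosetAB} forces $\varphi(G) = G$ and $\varphi(AB) = AB$, so we may set $\sigma = \varphi|_G$ and $\tau = \varphi|_{AB}$. Furthermore $\varphi(Gy) = G\tau(y)$ for $y \in AB$ and $\varphi(ABx) = AB\sigma(x)$ for $x \in G$, and intersecting these yields the decomposition $\varphi(xy) = \sigma(x)\tau(y)$.

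Next, I would use edge-counts to pin $\tau(b)$ down to one of two values. Unpacking $\wS \cup \wS^{-1}$ from $\wS = G \cup Sb \cup A \cup Ab$ (using that $AB$ is central in $\wG$) shows that, for $y \in AB$, the number of $x \in G$ with $xy \in \wS \cup \wS^{-1}$ equals $|G|$ if $y = 1$; equals $|S|$ if $y \in \{b, b^{-1}\}$; equals~$1$ if $y \in (A \cup Ab \cup Ab^{-1}) \setminus \{1, b, b^{-1}\}$; and equals $0$ otherwise. This is precisely the number of neighbours of~$1$ in the coset~$Gy$. Since $\varphi(Gy) = G\tau(y)$, $\varphi(1) = 1$, and $\varphi$ preserves neighbour counts, $\tau(b)$ must land on a $y$ whose slice has $|S|$ neighbours of~$1$. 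Because $1_G \in S$ and $S \neq S^{-1}$, we have $|S| \notin \{0, 1, |G|\}$, so $\tau(b) \in \{b, b^{-1}\}$.

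Finally, I would read off the conclusion by examining the edge between $x_1$ and $x_2 b$ in~$\wX$. Centrality of~$b$ gives $(x_2 b) x_1^{-1} = (x_2 x_1^{-1})b$, so this pair is an edge iff $x_2 x_1^{-1} \in S$. Applying $\varphi$, the image is the pair $\sigma(x_1)$, $\sigma(x_2)\tau(b)$, and an analogous computation shows this is an edge iff $\sigma(x_2)\sigma(x_1)^{-1} \in S$ (when $\tau(b) = b$) or iff $\sigma(x_2)\sigma(x_1)^{-1} \in S^{-1}$, equivalently $\sigma(x_1)\sigma(x_2)^{-1} \in S$ (when $\tau(b) = b^{-1}$). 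In the first case $\sigma$ is an automorphism of~$\dX$; in the second case the condition $x_2 x_1^{-1} \in S \iff \sigma(x_1)\sigma(x_2)^{-1} \in S$ says $\sigma$ is an isomorphism from~$\dX$ to~$\dX^-$. The main obstacle is the decomposition step: one must verify carefully that preserving both families of cosets forces $\varphi$ to respect the grid structure coming from the factorisation $\wG = G \times AB$; once $\varphi(xy) = \sigma(x)\tau(y)$ is in hand, the rest is routine counting and unravelling of definitions.
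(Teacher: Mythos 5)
Your proof is correct and follows essentially the same route as the paper's: both use the neighbour counts of the fixed vertex $1$ in the cosets $Gy$ to force $\varphi(Gb) = Gb^{\pm1}$, the intersection of a $G$-coset with an $AB$-coset to get $\varphi(gb)=\varphi(g)b^{\pm1}$ (your decomposition $\varphi(xy)=\sigma(x)\tau(y)$ is a slightly more general packaging of this, of which only the case $y=b$ is actually needed), and the translation of $\wX$-edges between $G$ and $Gb$ into directed edges of $\dX$. The only cosmetic difference is at the end, where you obtain the biconditional directly from $\varphi$ being an automorphism, while the paper proves one implication and then invokes equal outvalency of $\dX$ and $\dX^-$.
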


\begin{proof}
By~\pref{IfFixG-cosetG}, we know that $\varphi$ maps $G$ to some coset of~$G$. Since $\varphi(1) = 1$, this implies that 
	\[ \varphi(G) = G . \]
So the restriction of $\varphi$ to~$G$ is a permutation of~$G$.

From the definition of~$\wS$, we see that $1$ has $|S|$ neighbours in the coset $Gb$ and also has $|S|$ neighbours in the coset~$Gb^{-1}$ (and has $|G|$ neighbours in its own coset~$G$) but has only one neighbour in any other coset of~$G$. Therefore, 
	\begin{align*}
	\text{$\varphi(Gb) = Gb^\epsilon$, \ for some $\epsilon \in \{\pm1\}$.}
	\end{align*}
Let $\dX^\epsilon$ be $\dX$ or~$\dX^-$, depending on whether $\epsilon$ is~$1$ or~$-1$, respectively.

\medbreak

We claim that $\varphi(gb) = \varphi(g) b^\epsilon$, for every $g \in G$. To see this, note that \pref{IfFixG-cosetAB} implies $\varphi(gb) \in \varphi(g) AB$. Also, we see from the above displayed equations that $\varphi(gb) \in Gb^\epsilon = \varphi(g) Gb^\epsilon$. Since $(AB) \cap G = \{1\}$, this implies the claim.

\medbreak

We can now complete the proof, by showing that the restriction of~$\varphi$ to~$G$ is an isomorphism from~$\dX$ to~$\dX^\epsilon$.
To this end, let $g,h \in G$, such that $g \to h$ in~$\dX$. This means there exists $s \in S$, such that $h = sg$. Since $sb \in Sb$, we have $g \edge (sb)g$ in~$\wX$. Since $\varphi \in \Aut \wX$, this implies
	\[ \varphi(g) \edge \varphi \bigl( (sb)g \bigr) = \varphi \bigl( (sg) b \bigr) = \varphi(s g) b^\epsilon . \]
Since $\varphi(g), \varphi(sg) \in G$, we see from the definition of~$\wS$ that this implies $\varphi(sg) = t^\epsilon \varphi(g)$, for some $t \in S$. Hence, there is a directed edge from $\varphi(g)$ to $\varphi(sg)$ in~$\dX^\epsilon$. 

We have shown that if $g \to h$ in~$\dX$, then $\varphi(g) \to \varphi(h)$ in~$\dX^\epsilon$. Since $\dX$ and $\dX^\epsilon$ are regular digraphs of the same outvalency, this implies that the permutation $\varphi|_G$ is an isomorphism of digraphs.
\end{proof}

Our next series of lemmas % !!!
culminates in \cref{phiIsGood}, which establishes simple conditions that imply the hypotheses of the above % !!!
\lcnamecref{IfFixG} are satisfied. That will complete our preparations for the proof of \cref{MainThm}.

\begin{lem} \label{cliques}
If $n > 3$, then every maximal clique of~$\wX$ is induced by one of the following sets of vertices \textup(for some $x \in \wG$\textup{):}
	\begin{enumerate} \renewcommand{\theenumi}{\alph{enumi}}
	\item \label{cliques-G}
	$G x$ \textup(i.e., a coset of the subgroup~$G$\textup),
	\item \label{cliques-A} 
	$Ax \cup Abx$,
	or
	\item \label{cliques-other}
	a subset of $G x \cap Gbx$ that does not contain any coset of~$G$.
	\end{enumerate}
If $n = 3$, then \pref{cliques-A} and~\pref{cliques-other} are replaced with:
	\begin{enumerate} 
	\renewcommand{\theenumi}{\ref{cliques-A}$'$}
	\item \label{cliques-A3}
	$ABx$, 
	and
	\renewcommand{\theenumi}{\ref{cliques-other}$'$}
	\item \label{cliques-other3}
	a subset of $GB x$ that does not contain any coset of~$G$.
	\end{enumerate}
Conversely, the subgraph induced by each subset listed in \pref{cliques-G} or~\textup{(\ref{cliques-A}/\ref{cliques-A3})} is a maximal clique.
\end{lem}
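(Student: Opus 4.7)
The plan is to analyze maximal cliques of $\wX$ via the projection $\pi\colon \wG \to \wG/G \cong AB$, using the following enumeration coset-by-coset: for $u \in AB \setminus \{1\}$, the intersection $\wS^{\pm 1} \cap Gu$ equals $\{u\}$ when $u \in A\{1,b,b^{-1}\} \setminus \{1,b,b^{-1}\}$ (call such $u$ \emph{forced}), equals $Sb$ when $u=b$, equals $S^{-1}b^{-1}$ when $u=b^{-1}$, and is empty otherwise. Since $S \neq S^{-1}$ forces $S \neq G$, we have $|S| < |G|$, which gives $\bigcap_{g \in G} Sg = \emptyset$.

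For the converse assertion (that the sets in \pref{cliques-G}, \pref{cliques-A}, and \pref{cliques-A3} are maximal cliques), I would verify the clique property directly, using $G \subseteq \wS^{\pm 1}$ and $A \cup Ab \cup Ab^{-1} \subseteq \wS^{\pm 1}$ together with commutativity of the three factors (the identity $b^2 = b^{-1}$ merges the three cosets into $AB$ when $n = 3$). Maximality in each case follows from a whole-coset argument: an extending vertex $y$ would force some nontrivial coset of $G$ to sit entirely in $\wS^{\pm 1}$, which the enumeration above combined with $|S| < |G|$ rules out.

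For the main direction, let $C$ be a maximal clique. By right-translation (a Cayley automorphism) assume $1 \in C$, and let $P = \pi(C) \subseteq AB$. Adjacency of $1$ with elements of $C$ gives $P \subseteq A\{1, b, b^{-1}\}$, and applying the condition to pairs in $P$ gives $uu'^{-1} \in A\{1, b, b^{-1}\}$ for all $u, u' \in P$. When $n > 3$, this forbids $P$ from meeting both $Ab$ and $Ab^{-1}$ (the ratios lie in $Ab^{\pm 2}$), so after possibly swapping $b \leftrightarrow b^{-1}$, $P \subseteq A \cup Ab$; when $n = 3$, $b^{-1} = b^2$ and we only require $P \subseteq AB$. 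If $|P| = 1$ then maximality yields $C = G$, which is case~\pref{cliques-G}.

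Assume $|P| \ge 2$. If some $u_0 \in P$ is forced, then $C \cap Gu_0 = \{u_0\}$, and a cascade shows $C \cap Gu' = \{u'\}$ for every $u' \in P$ (when $u'$ itself is forced, adjacency to $1$ suffices; when $u' \in \{1, b, b^{-1}\}$, the ratio $u'u_0^{-1}$ turns out to be forced, so adjacency to $u_0$ pins down the unique option). Thus $C = P$, and a direct check verifies that $A \cup Ab$ (or $AB$ for $n = 3$) is a clique extending $C$, so by maximality $C$ equals case~\pref{cliques-A} or~\pref{cliques-A3}. Otherwise no element of $P$ is forced, so $P \subseteq \{1, b, b^{-1}\}$ and $C$ lies in $Gx \cup Gbx$ (for $n > 3$, with $x \in \{1, b^{-1}\}$) or in $GBx$ (for $n = 3$), giving case~\pref{cliques-other}/\pref{cliques-other3}. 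The no-full-coset condition is automatic: a full coset $Gx \subseteq C$ together with any $v \in C \cap Gbx$ would force $v \in \bigl(\bigcap_{g \in G} Sg\bigr)\cdot bx = \emptyset$, contradicting $|P| \ge 2$. The main obstacle is the forced-case cascade: the three floppy cosets $G, Gb, Gb^{-1}$ each admit multiple candidate vertices via adjacency to $1$ alone, and a uniform treatment for $n > 3$ and $n = 3$ (including the WLOG reductions) requires careful bookkeeping.
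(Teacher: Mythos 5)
Your overall strategy---classifying a maximal clique $C$ by its image $P$ under the projection $\wG \to AB$, and exploiting the fact that $\wS^{\pm 1}$ meets each coset $Gu$ with $u \notin \{1, b, b^{-1}\}$ in at most one point---is essentially the paper's argument: the paper's case division (``$C$ is, or is not, contained in a coset of $GB$'') is exactly your dichotomy on whether $P$ contains a forced element. Your coset-by-coset enumeration of $\wS^{\pm 1}$, the cascade in the forced case, and the $b$-exponent analysis for cliques inside $GB$ are all correct (and you correctly read the statement's ``$Gx \cap Gbx$'' as the intended union $Gx \cup Gbx$).

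The one step that fails as written is the maximality argument for the cliques $Ax \cup Abx$ and $ABx$. You assert that an extending vertex $y$ ``would force some nontrivial coset of $G$ to sit entirely in $\wS^{\pm 1}$,'' but it would not: adjacency of $y$ to all of $A$ only forces the coset $yA$ of $A$ --- not any coset of $G$ --- into $\wS^{\pm 1}$, so the enumeration-plus-$|S|<|G|$ mechanism does not apply. The correct obstructions are of a different shape. If $y$ has nontrivial $G$-component, then for nontrivial $a \in A$ the difference $ya^{-1}$ lies in a \emph{forced} $G$-coset but is not the unique point of $\wS^{\pm 1}$ in that coset, so $y$ fails to be adjacent to $a$ (this is your forced-singleton mechanism, not a whole-coset-of-$G$ mechanism); and if $y \in Ab^{-1}$ with $n>3$, no coset of $G$ is relevant at all --- the obstruction is that differences with $Ab$ land in $Ab^{-2}$, which is disjoint from $\wS^{\pm 1}$. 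Both repairs are available from your own enumeration, so this is a fixable slip rather than a structural flaw, but as stated the proposal does not establish that the sets in \pref{cliques-A} and \pref{cliques-A3} induce \emph{maximal} cliques. A smaller bookkeeping point: your no-full-coset argument treats only the case where the full coset is $Gx$ rather than $Gbx$; the paper's shortcut --- a clique containing a coset of $G$ must equal it, by the already-proved maximality of type~\pref{cliques-G} --- disposes of all such cases at once.
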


\begin{proof}
We first prove the ``converse'' that is stated the final sentence of the \lcnamecref{cliques}. Assume, without loss of generality, that $x = 1$.

\pref{cliques-G} Since $G$ is contained in~$\wS$, it is clear that the subgraph induced by $G$ is a clique. So we just need to show that the clique is maximal.  Suppose $y \in \wG$, such that $y \notin G$, but $y$ is adjacent to every vertex in~$G$. After translating by an element of~$G$, we may assume $y \in AB$. Since $y$ is adjacent to~$1$, we also know that $y \in \wS^{\pm1}$. Hence, $y \in A \cup Ab \cup Ab^{-1}$ (and $y \neq 1$). Since $S \neq S^{-1}$ \csee{XhatDefn}, there is some nontrivial $g \in G$, such that $g^{-1} \notin S$. Now, we see from the definition of~$\wS$ that:
	\begin{itemize}
	\item If $s \in \wS$, such that $sg \in AB$, and $sg \neq 1$, then $s \in (Sb)^{-1}$. Hence, $g$ is not adjacent to any element of $A$ or~$Ab$. Therefore, $y \notin A \cup Ab$. 
	\item Similarly, if $s \in \wS$, such that $s g^{-1} \in AB$, and $s g^{-1} \neq 1$, then $s \in Sb$. Hence, $g^{-1}$ is not adjacent to any element of~$Ab^{-1}$. Therefore, $y \notin Ab^{-1}$. 
	\end{itemize}
This is a contradiction. So we conclude that the clique induced by $G$ is indeed maximal.

\pref{cliques-A} Since $A$ is contained in~$\wS$, it is clear that $A$ and $Ab$ each induce a clique. Also, we know that every vertex in $A$ is adjacent to every vertex in $Ab$, because $Ab$ is contained in~$\wS$. Therefore, the subgraph induced by $A \cup Ab$ is a clique. 
So we just need to show that this clique is maximal.  Suppose $y \in \wG$, such that $y \notin A \cup Ab$, but $y$ is adjacent to every vertex in $A \cup Ab$. Since $y$ is adjacent to~$1$, we know that $y \in \wS^{\pm1}$. Since $y \notin A \cup Ab$, we conclude that $y \in G \cup Sb \cup Sb^{-1} \cup Ab^{-1}$. Since $n > 3$, no element of $GAb^{-1}$ is adjacent to any element of~$Ab$. Therefore, we must have $y \in G \cup Sb$ (and $y \neq 1,b$). Then every neighbour of~$y$ that is in $AB$ must be in 
	\[ Gy \cup Sby \cup (Sb)^{-1} y \subseteq GBy . \]
Since $GBy$ does not include any nontrivial elements of~$A$, this contradicts the fact that $y$ is adjacent to every vertex in~$A$. 

\pref{cliques-A3} 
Since $n = 3$, we have $AB \subseteq \wS$, so the subgraph induced by $AB$ is a clique. So we just need to show that this clique is maximal.  Suppose $y \in \wG$, such that $y \notin AB$, but $y$ is adjacent to every vertex in $AB$. We may assume $y \in G$ (after translating by an element of~$AB$). This implies that $y$ is not adjacent to any nontrivial element of~$A$  (see the proof of case~\pref{cliques-A}). This is a contradiction.

\medbreak

Now, let $C$ be any maximal clique in~$\wX$, and assume, without loss of generality, that $1 \in C$. This implies $C \subseteq \wS^{\pm1}$. We will prove that $C$ is contained in a clique of type \pref{cliques-G}, \pref{cliques-A}, \pref{cliques-A3}, \pref{cliques-other}, or~\pref{cliques-other3}.

\refstepcounter{caseholder}

\begin{case}
Assume $C$ is not contained in any coset of $GB$.
\end{case}
Then $C$ must contain some vertex of the form $gab^i$ where $g \in G$, $i \in \ZZ$, and $a$ is a nontrivial element of~$A$. Since $gab^i \in C \subseteq \wS^{\pm1}$, we must have $g = 1$ and $i \in \{0, \pm1\}$.

Now, let $v$ be any common neighbour of $1$ and $ab^i$. There is some $c \in \{1, ab^i\}$ that is not in $GBv$. Then (by the preceding argument) we have $v = a b^j c $, for some $j \in \{0, \pm1\}$. Hence, we have $v \in AB$. This shows that $C$ is contained in a maximal clique of type~\pref{cliques-A3} if $n = 3$.

So we may assume $n > 3$. Then no vertex in $Ab$ is adjacent to any vertex in~$Ab^{-1}$. Thus, $C$ must be contained in either $A \cup Ab$ or $Ab^{-1} \cup A$. Each of these sets is of type~\pref{cliques-A} (with $x \in \{1, b^{-1}\}$).

\begin{case}
Assume $C$ is contained in a coset of $GB$, and is not of type~\pref{cliques-G}.
\end{case}
Since $1 \in C$, we must have $C \subseteq GB$. We also know that $C$ does not contain any coset of~$G$ (since cosets of~$G$ induce maximal cliques of type~\pref{cliques-G}), so we may assume $n > 3$, for otherwise $C$ is of type~\pref{cliques-other3}. Note that 
	\[ C \subseteq GB \cap \wS^{\pm1} =  G \cup Gb \cup Gb^{-1} . \]
However, no vertex in $Gb$ is adjacent to any vertex in~$Gb^{-1}$ (since $n > 3$), so we conclude that $C$ is contained in either $G \cup Gb$ or $Gb^{-1} \cup G$. So $C$ is of type~\pref{cliques-other} (with $x \in \{1, b^{-1}\}$).
\end{proof}

The following observation is a variation of the easy (and well known) fact that if $x$ and~$y$ are two vertices of $\Cayd(G; S)$ that have the same outneighbours, then $S$ is a union of left cosets of the subgroup generated by $x y^{-1}$. 

\begin{lem} \label{AlmClosedIsSubgrp}
Assume $D$ is a set of vertices of $\Cayd(G; S)$, such that $|D| \ge \max \bigl( |S| - 1, 2 \bigr)$, and all of the vertices in~$D$ have exactly the same outneighbours. 
Then $S$ is a subgroup of~$G$.
\end{lem}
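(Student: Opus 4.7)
My approach is to study the right-translation stabilizer of $S$, namely the subgroup
\[ H = \{ g \in G : Sg = S \} \]
of $G$. The key observation, straight from the definition of $\Cayd(G;S)$, is that the outneighbour set of $x$ is $Sx$; hence two vertices $x,y\in D$ have the same outneighbours precisely when $Sx = Sy$, equivalently $xy^{-1}\in H$. Thus, fixing any $x_0 \in D$, the whole set $D$ lies in the single right coset $Hx_0$, giving the bound $|D| \le |H|$.

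Next I would exploit the dual interpretation of $H$: the identity $Sg = S$ for every $g\in H$ means that $S$ is invariant under right multiplication by $H$, so $S$ is a union of \emph{left} cosets of $H$. In particular, $|H|$ divides $|S|$. Writing $|S| = m|H|$ for a positive integer $m$, the hypothesis $|H| \ge |D| \ge \max(|S|-1, 2)$ rearranges to $(m-1)|H| \le 1$, and combined with $|H|\ge 2$ this forces $m = 1$. Therefore $|S| = |H|$, and $S$ is a single left coset of $H$.

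To finish, I invoke \cref{1inS}, which places $1_G$ in $S$; since $S$ is a left coset of $H$ that contains the identity, it must equal $H$ itself, so $S = H$ is a subgroup. The only real point to keep track of is the direction of the action — outneighbours of $x$ are $Sx$, so $H$ is defined via right multiplication on the left of $S$, which is exactly what makes $S$ a union of left cosets of $H$ and produces the divisibility $|H| \mid |S|$. The rest is arithmetic bookkeeping with the two bounds, and I do not anticipate any serious obstacle.
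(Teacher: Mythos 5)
Your proof is correct and takes essentially the same route as the paper's: both identify a subgroup of which $S$ is a union of left cosets and which contains a translate of~$D$ (you use the full right stabilizer $H=\{g : Sg=S\}$, the paper uses $\langle D\rangle$ after translating so that $1\in D$), and then use the cardinality bounds together with $1_G\in S$ (\cref{1inS}) to force $S$ to equal that subgroup. The only microscopic gap is that $(m-1)|H|\le 1$ with $|H|\ge 2$ gives $m\le 1$ rather than $m=1$; excluding $m=0$ also requires $1_G\in S$, which you invoke anyway at the final step.
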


\begin{proof}
Assume, without loss of generality, that $1 \in D$. Then $S$ is the set of outneighbours of every element of~$D$, so $Sd = S$ for every $d \in D$. This means that $S$ is a union of left cosets of $\langle D \rangle$. If $S \neq  \langle D \rangle$, then, since $|S| \le |D| + 1$, this implies that the elements of~$D$ form one coset of $\langle D \rangle$, and the remaining element of~$S$ is an entire coset of $\langle D \rangle$. So $|\langle D \rangle| = 1$. This contradicts the assumption that $|D| \ge 2$.
\end{proof}

\begin{lem} \label{phiIsGood}
Assume 
\noprelistbreak
	\begin{itemize} \itemsep=\smallskipamount % not for publication version !!!
	 \item $\varphi$ is an automorphism of~$\wX$ that fixes the vertex $1_{\wG}$,
	 \item $nk \neq |G|$  \textup(recall that $n$ and~$k$ were defined in \cref{XhatDefn}\textup),
	 and
	 \item $|S| \le \bigl( |G| + 1 \bigr)/k$.
	 \end{itemize}
Then the hypotheses of \cref{IfFixG} are satisfied.
\end{lem}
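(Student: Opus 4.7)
The plan is to use Lemma~\ref{cliques} to identify cosets of $G$ as the maximal cliques of a distinguished size, then to deduce~\pref{IfFixG-cosetG} from the fact that $\varphi$ permutes maximal cliques, and finally to deduce~\pref{IfFixG-cosetAB} by examining which bipartite subgraphs between cosets of $G$ are perfect matchings.

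For~\pref{IfFixG-cosetG}, the cosets of $G$ are the type~(a) cliques, of size $|G|$, whereas type~(b)/(b$'$) cliques have the different size $nk$ by hypothesis; the real work is to rule out type~(c)/(c$'$) cliques of size $|G|$. I would decompose such a clique $C$ as $C=\bigsqcup_j D_j$, where each $D_j$ is the intersection of $C$ with one of the (at most $k$) cosets of $G$ that meet $C$. The adjacency analysis from the proof of Lemma~\ref{cliques} (using $Sb\subseteq\wS$, $S^{-1}b^{-1}\subseteq\wS^{-1}$, and, when $n=3$, the analogous description of edges between $Gb$ and $Gb^{-1}$) shows that, translated back into $G$, each $D_j$ lies in an intersection of right translates of $S$ or of $S^{-1}$ indexed by the vertices of the other $D_{j'}$'s. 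Hence $|D_j|\le|S|$ and $|C|\le k|S|\le|G|+1$. Whenever $|C|=|G|$, a pigeonhole count (using $k\ge2$) forces some $|D_j|=|S|$, which in turn forces all the translates of $S$ indexed by vertices of some other $D_{j'}$ to coincide---equivalently, the vertices of that $D_{j'}$ share a common outneighbour set in $\dX$ or $\dX^-$. Since $|D_{j'}|\ge|S|-1$ by the same count, Lemma~\ref{AlmClosedIsSubgrp} forces $S$ to be a subgroup of $G$, contradicting $S\neq S^{-1}$ from \cref{XhatDefn}. Hence the maximal cliques of size $|G|$ are exactly the cosets of $G$, so $\varphi$ permutes them.

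For~\pref{IfFixG-cosetAB}, once cosets of $G$ are preserved, $\varphi$ preserves each bipartite subgraph of $\wX$ between two cosets of $G$, and so preserves which of those bipartite subgraphs are perfect matchings. The nonempty perfect-matching bipartite subgraphs are precisely those between $Gy$ and $Gzy$ for $z\in(A\cup Ab\cup Ab^{-1})\setminus\{1,b,b^{-1}\}$, in which case the unique neighbour of $v\in Gy$ is $zv$; by contrast the bipartite subgraph to $Gby$ or $Gb^{-1}y$ is $|S|$-regular with $|S|\ge2$ (using $S\neq S^{-1}$), hence not a matching. Every such perfect-matching edge stays inside a single coset of $AB$ (since $zv$ has the same $G$-component as $v$), and these edges connect each coset of $AB$ because $(A\cup Ab\cup Ab^{-1})\setminus\{1,b,b^{-1}\}$ generates $AB$. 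Thus the connected components of the matching subgraph of $\wX$ are exactly the cosets of $AB$, and $\varphi$ permutes them.

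The main obstacle is the equality case of $|C|\le k|S|$ in Step~1: one has to extract a subset $D_{j'}$ of size at least $|S|-1$ whose vertices share an outneighbour set in $\dX$ or $\dX^-$, so that Lemma~\ref{AlmClosedIsSubgrp} can be invoked to yield the contradiction $S=S^{-1}$.
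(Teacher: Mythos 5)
Your argument is correct, and its two halves compare differently with the paper. For hypothesis \pref{IfFixG-cosetG} you follow essentially the paper's route: the paper likewise decomposes a putative maximal clique $C$ of type \pref{cliques-other}/\pref{cliques-other3} with $|C|\ge|G|$ into the pieces $C_i=C\cap Gb^i$, bounds each $|C_i|$ by $|S|$ (the bound on $C_0$ coming from the $|S|$-element neighbourhood in $G$ of any vertex of $C$ outside $G$), squeezes $|G|\le|C|\le k|S|\le|G|+1$ to force some $|C_j|=|S|$ and the remaining nonempty pieces to have size at least $|S|-1$, concludes that the vertices of such a piece have identical outneighbour sets, and invokes \cref{AlmClosedIsSubgrp} to contradict $S\neq S^{-1}$. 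The ``main obstacle'' you flag is resolved exactly as you sketch it, so that half is the same proof. For hypothesis \pref{IfFixG-cosetAB} you genuinely diverge: the paper stays inside the clique analysis, observing that a clique of type \pref{cliques-A}/\pref{cliques-A3} has $nk>k$ vertices while any clique of type \pref{cliques-other}/\pref{cliques-other3} meeting each $G$-coset at most once has at most $k$ vertices, so $\varphi$ permutes the type-\pref{cliques-A} cliques and hence the $AB$-cosets they sweep out; you instead exploit the already-established permutation of $G$-cosets and classify the bipartite subgraphs between them, noting that the perfect matchings are exactly those indexed by $z\in(A\cup Ab\cup Ab^{-1})\smallsetminus\{1,b,b^{-1}\}$, a set generating $AB$, so the components of the union of these matchings are precisely the $AB$-cosets. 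Both routes are valid; yours avoids re-entering the clique structure at the cost of a separate computation of $\wS^{\pm1}\cap Gz$ for each $z\in AB$. One small point: to rule out the bipartite graph between $Gy$ and $Gb^{\pm1}y$ being a matching you need $|S|\ge 2$, and for that $S\neq S^{-1}$ alone does not suffice (a singleton $\{s\}$ with $s^2\neq 1$ satisfies it); you also need $1_G\in S$, which is guaranteed by \cref{1inS}.
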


\begin{proof}
\fullref{IfFixG}{1}: \emph{We have $\varphi(1) = 1$.}
This is true by assumption.

\fullref{IfFixG}{cosetG}: \emph{$\varphi$ maps each coset of $G$ to a coset of~$G$.}
The maximal cliques of~$\wX$ are described in \cref{cliques}. It is obvious that every clique of type~\pref{cliques-G} has cardinality~$|G|$, and that every clique of type~\pref{cliques-A} or~\pref{cliques-A3} has cardinality~$nk$ (since $|A| = n$). 
Assume, for the moment, that 
	\begin{align} \label{phiIsGoodPf-card}
	\begin{matrix}
	\text{\emph{the cardinality of every maximal clique of type~\pref{cliques-other}}} \\
	\text{\emph{\textup(or of type~\pref{cliques-other3} if $n = 3$\textup) is strictly less than~$|G|$.}} 
	\end{matrix}
	\end{align}
 (We will show how to complete the proof with this assumption, and we will establish later that the assumption is indeed true.) 
This assumption implies that the cosets of~$G$ are the only maximal cliques whose cardinality is~$|G|$. So every automorphism of~$\wX$ (including~$\varphi$) must map each coset of~$G$ to a coset of~$G$. 

\fullref{IfFixG}{cosetAB}: \emph{$\varphi$ maps each coset of $AB$ to a coset of~$AB$.}
Each maximal clique of type~\pref{cliques-A} or~\pref{cliques-A3} contains no more than one vertex of any coset of~$G$. If a maximal clique of type~\pref{cliques-other} or~\pref{cliques-other3} has this property, then it cannot have more than $k$ vertices. Since maximal cliques of type~\pref{cliques-A} or~\pref{cliques-A3} have $kn$ vertices (and $kn > k$), this implies that no automorphism can carry a maximal clique of type~\pref{cliques-A} or~\pref{cliques-A3} to a clique of type~\pref{cliques-other} or~\pref{cliques-other3}. So $\varphi$ must preserve the set of cliques of type~\pref{cliques-A} (or~\pref{cliques-A3}). This easily implies Hypothesis~\fullref{IfFixG}{cosetAB}.

\medbreak % not for publication !!!

Now, all that remains is to prove Assumption~\pref{phiIsGoodPf-card}.
To this end, let $C$ be a maximal clique of type~\pref{cliques-other} (or of type~\pref{cliques-other3} if $n = 3$), and suppose $|C| \ge |G|$. (This will lead to a contradiction, which completes the proof.)
Also assume, without loss of generality, that $1_{\wG} \in C$. Then 
	\[ C \subseteq GB \cap \wS^{\pm1} = G \cup Sb \cup S b^{-1} \subseteq G \cup Gb \cup G b^{-1} . \]
Let $C_i = C \cap Gb^i$ for $i \in \{-1,0,1\}$. It is clear that $|C_1| \le |Sb| = |S|$ and $|C_{-1}| \le |Sb^{-1}| = |S|$.

We claim that also $|C_0| \le |S|$. Since $C \neq G$ and $|C| \ge |G|$, we know that $C \nsubseteq G$. Hence, $C_{-1} \cup C_1 \neq \emptyset$, so there is some $g b^\epsilon$ in~$C$, with $\epsilon \in \{\pm1\}$. The set of neighbours of this vertex in $G$ is $(Sb)^{-\epsilon} g b^\epsilon$. Since this set of neighbours contains~$C_0$ and has cardinality~$|S|$, we conclude that $|C_0| \le |S|$.

Choose $j \in \{-1,0,1\}$, such that $|C_j|$ is maximal. 
Since at most~$k$ of the sets $C_{-1}, C_0, C_1$ are nonempty, and 
	\[ |C_i| \le |C_j| \le |S| \le \bigl( |G| + 1 \bigr)/k, \] 
then we have
	\[ |G| \le |C| = |C_{-1}| + |C_0| + |C_1| \le k \cdot |C_j| \le k \cdot |S| \le k \cdot \bigl( |G| + 1 \bigr)/k = |G| + 1 ,\]
so
	\[ \frac{|G|}{k} \le |C_j| \le |S| \le \frac{|G|}{k} + \frac{1}{k} .\]
Since $1/k < 1$, this implies $|C_j| = |S|$ (because the absolute value of the difference of two distinct integers cannot be less than one). Also, since 
	\[ |C_{-1}| + |C_0| + |C_1| = |C| \ge |G| \ge k \cdot |C_j|  - 1 , \]
we have $|C_i|  \ge |C_j| - 1$ for all $i \neq j$.  Assume, for the sake of concreteness, that $j = 1$, so we may let $i = 0$.

Since $C$ is a clique, we know that every element of~$C_1$ is an outneighbour of every element of~$C_0$. From the definition of~$\wS$, we also know, for each $x \in C_0$, that the number of outneighbours of~$x$ in $Gb$ is precisely $|S| = |C_1|$. Hence, we conclude that $C_1$ is the set of all outneighbours of~$x$ that are in~$Gb$; so $C_1 = Sbx$. Since $x$ is an arbitrary element of~$C_0$ (and $1_G \in C_0$), we conclude that $Sb x = Sb \, 1_G$. Since $Sb = bS$ (recall that $b$ commutes with all elements of~$G$), this implies $Sx = S$, for all $x \in C_0$.

Now \cref{AlmClosedIsSubgrp} tells us that $S$ is a subgroup of~$G$. Therefore $S$ is closed under inverses (i.e., $S = S^{-1}$). This contradicts \cref{XhatDefn}.
\end{proof}

We are now ready to prove \cref{MainThm}. For the reader's convenience, we copy the statement here.

\addtocounter{copythm}{-1} % !!!
\begin{copythm} % MainThm
Assume 
	\begin{itemize}
	\item $G$ is a finite group,
	\item $\dX = \Cayd(G; S)$ is a non-DCI Cayley digraph,
	\item $n \ge 3$
	and
	$k = \begin{cases} 2 & \text{if $n > 3$}; \\ 3 & \text{if $n = 3$} , \end{cases}$
	\item $nk \neq |G|$,
	\item either $n > 3$ or $|S \cup \{1_G\}| \le |G|/3$,
	and
	\item either $\dX \not\cong \dX^-$ or there is an automorphism~$\alpha$ of~$G$, such that $\alpha(S) = S^{-1}$.
	\end{itemize}
Then $G \times (\ZZ_n)^2$ is not a CI group.
\end{copythm}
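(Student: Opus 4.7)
\emph{Setup.} My plan is to construct, from the given non-DCI digraph $\dX$, a non-standard sharply transitive subgroup of $\Aut(\wX)$ isomorphic to $\wG$, and then use \cref{Babai} to conclude that $\wX$ is not a CI graph of $\wG = G \times (\ZZ_n)^2$. If $S = S^{-1}$ I first note that $\dX$ is already an undirected graph, so ``non-DCI'' coincides with ``non-CI,'' whence $G$ itself is not a CI group and the theorem follows from subgroup-closure of the CI property. So I assume $S \neq S^{-1}$. When $n > 3$, if $|S| > (|G|+1)/2$ I first replace $S$ by its complement-with-loop $(G \setminus S) \cup \{1_G\}$---this preserves the non-DCI property, keeps $1_G \in S$, and maintains the ``$\alpha(S) = S^{-1}$ or $\dX \not\cong \dX^{-}$'' alternative---so I may assume $|S| \leq (|G|+1)/k$ throughout. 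I then take $A = B = \ZZ_n$ in \cref{XhatDefn}, so $\wG = G \times (\ZZ_n)^2$ and $\wX = \Cay(\wG; \wS^{\pm 1})$.

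\emph{Building the bad regular subgroup.} From the non-DCI-ness of $\dX$, \cref{Babai} extracts $H \leq \Aut(\dX)$ that is sharply transitive, isomorphic to $G$, and \emph{not} conjugate to $G_R$ inside $\Aut(\dX)$. I lift each $h \in H$ to $\hat h \in \mathrm{Sym}(\wG)$ by $\hat h(g, a, b^i) = (h(g), a, b^i)$. A direct check---using that $\wS^{\pm 1}$ splits as $G \cup Sb \cup S^{-1}b^{-1} \cup A \cup Ab \cup Ab^{-1}$, that $h \in \Aut(\dX)$ preserves the $S$-adjacencies inside the $G$-layer, and that the $A$- and $B$-coordinates are untouched---shows $\hat h \in \Aut(\wX)$. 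Since $\hat H$ commutes elementwise with $A_R$ and $B_R$, the product $H' := \hat H \cdot A_R \cdot B_R$ is isomorphic to $\wG$ and acts sharply transitively on $\wG$. By \cref{Babai}, it suffices to prove $H'$ is not conjugate to $\wG_R$ in $\Aut(\wX)$.

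\emph{Ruling out the conjugation.} Suppose some $\varphi \in \Aut(\wX)$ satisfies $\varphi H' \varphi^{-1} = \wG_R$; after composing with an element of $\wG_R$, I may assume $\varphi(1_\wG) = 1_\wG$. The bound on $|S|$ arranged above, together with the hypothesis $nk \neq |G|$, lets me apply \cref{phiIsGood}, so \cref{IfFixG} applies: $\varphi$ permutes cosets of $G$, and $\varphi|_G$ is either in $\Aut(\dX)$ or an isomorphism $\dX \to \dX^{-}$. The key observation is that the orbits of $\hat H$ on $\wG$ are exactly the cosets of $G$ (since $H$ is transitive on $G$ and fixes each pair $(a, b^i)$); hence $\varphi \hat H \varphi^{-1} \leq \wG_R$ has these same orbits, and the only subgroup of $\wG_R$ of order $|G|$ that stabilizes every coset of $G$ setwise is $G_R$. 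So $\varphi \hat H \varphi^{-1} = G_R$, and restricting to $G$ yields $\varphi|_G H (\varphi|_G)^{-1} = G_R$ in $\mathrm{Sym}(G)$. If $\varphi|_G \in \Aut(\dX)$ this immediately contradicts the choice of $H$. If instead $\varphi|_G$ is an isomorphism $\dX \to \dX^{-}$, then (in particular) $\dX \cong \dX^{-}$, so the hypothesis supplies $\alpha \in \Aut(G)$ with $\alpha(S) = S^{-1}$; this $\alpha$ is itself an isomorphism $\dX \to \dX^{-}$, whence $\alpha^{-1} \varphi|_G \in \Aut(\dX)$, and because $\alpha \in \Aut(G)$ normalizes $G_R$ the composition still conjugates $H$ onto $G_R$---the same contradiction.

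\emph{Main obstacle.} The subtle step is the orbit identification that pins $\varphi \hat H \varphi^{-1}$ down to \emph{exactly} $G_R$ inside $\wG_R$: this is what translates a CI-failure on $\wX$ into a failure of the regularity alternative on $\dX$. The remaining ingredients---the lift $\hat h$, the WLOG reduction on $|S|$ when $n > 3$, and the two-case analysis of $\varphi|_G$---are routine once that linkage is in place.
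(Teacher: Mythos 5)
Your proposal is correct and follows essentially the same route as the paper's proof: reduce to $S \neq S^{-1}$, normalize $|S|$ by the complement trick, lift the non-conjugate regular subgroup $H$ of $\Aut\dX$ to $\hat H \cdot A_R \cdot B_R \le \Aut\wX$, and use \cref{phiIsGood} and \cref{IfFixG} to force a conjugating $\varphi$ to restrict to an isomorphism $\dX \to \dX^{\pm}$ carrying $H$ to $G_R$, contradicting the choice of $H$ in both cases. Your explicit orbit argument identifying $\varphi \hat H \varphi^{-1}$ with $G_R$ inside $\wG_R$ is a welcome elaboration of a step the paper states more tersely, but it is not a different method.
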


\begin{proof}
We may assume $S \neq S^{-1}$, for otherwise $\Cay(G; S)$ is a non-CI Cayley graph of~$G$, so $G$ is not a CI group; hence any finite group that contains $G$ is also not a CI group.

Let $A = B = \ZZ_n$, and
recall that $\wG$, $\wS$, and $\wX = \Cay(\wG; \wS^{\pm1})$ are defined in \Cref{XhatDefn}. We will show that the Cayley graph~$\wX$ is not a CI graph.

Since $\dX$ is not DCI, we know from \cref{Babai} that there is a subgroup~$M$ of $\Aut \dX$, such that $M$ is isomorphic to~$G$, and acts sharply transitively on the vertex set $V(\dX) = G$, but 
	\begin{align} \label{MainThmPf-notRegRep}
	\text{no element of $\Aut \dX$ conjugates $M$ to $G_R$.} 
	\end{align}
It is easy to see that $\Aut \dX \times A_R \times B_R$ is contained in $\Aut \wX$, so 
	\begin{itemize}
	\item $M \times A_R \times B_R \subseteq \Aut \wX$,
	and it is clear that 
	\item this subgroup is isomorphic to~$\wG$ and acts sharply transitively on $G \times A \times B = \wG = V(\wX)$.
	\end{itemize}

If $\wX$ is CI, then \cref{Babai} tells us that some $\varphi \in \Aut \wX$ conjugates $M \times A_R \times B_R$ to the right regular representation of~$\wG$.
We may assume $\varphi(1_{\wG}) = 1_{\wG}$ (by composing $\varphi$ with a translation).
Also, there is no loss of generality in assuming that $|S| \le \bigl( |G| + 1 \bigr)/2$, because we can replace $S$ with its (almost) complement $(G \smallsetminus S) \cup \{1_G\}$. (Recall that \cref{1inS} requires us to keep $1_G$ in~$S$.)
Then the hypotheses of \cref{phiIsGood} are satisfied, so we conclude that the hypotheses of \cref{IfFixG} are satisfied. Hence, the restriction of~$\varphi$ to~$G$ is either 
	an automorphism of~$\dX$ or an isomorphism from $\dX$ to $\dX^-$.

Since $\varphi$ conjugates $M \times A_R \times B_R$ to $\wG_R = G_R \times A_R \times B_R$, and we now know that $\varphi(G) = G$, we can conclude that 
	\[ \text{the restriction $\varphi|_G$ conjugates $M$ to~$G_R$.} \]
This contradicts~\pref{MainThmPf-notRegRep} if $\varphi|_G$ is an automorphism of~$\dX$.

Therefore, $\varphi|_G$ must be an isomorphism from $\dX$ to $\dX^-$. Then $\dX \cong \dX^-$, so, by assumption, there is an automorphism~$\alpha$ of~$G$, such that $\alpha(S) = S^{-1}$, so $\alpha$ is an isomorphism from $\dX$ to $\dX^-$. Also, $\alpha$ normalizes~$G_R$ (since $\alpha$ is a group automorphism). Then the composition $\alpha \circ \varphi|_G$ is an automorphism of~$\dX$ that conjugates $M$ to~$G_R$. This is again a contradiction to~\pref{MainThmPf-notRegRep}.
\end{proof}

\end{document}